\documentclass[dvips,a4paper]{article}
\newlength{\marge}
\paperheight
\paperwidth
\usepackage{lmodern}
\usepackage{textgreek}
\usepackage[utf8]{inputenc}

\usepackage[breaklinks=true,colorlinks=true,citecolor=red,linkcolor=blue,urlcolor=red,unicode]{hyperref}

\title{On the (Fourier analytic) Sidon constant of \{0,1,2,3\}}
\author{Stefan Neuwirth}
\date{26 August 2013}

\usepackage[english]{babel}
\usepackage{textcomp}
\usepackage{amssymb,amsmath}
\usepackage{amsthm}
\theoremstyle{plain}
\newtheorem{thm}{Theorem}[section]
\newtheorem{prp}[thm]{Proposition}

\theoremstyle{definition}

\theoremstyle{remark}
\newtheorem{rem}[thm]{Remark}

\def\C{\mathbb C}
\def\Cont{\mathrm C}

\def\T{\mathbb T}
\def\Ell{\mathrm{L}}

\def\U{\mathbb U}
\def\e{\mkern1mu\mathrm e\mkern1mu}
\def\ei#1{\mkern1mu\mathrm e\mkern2mu^{\mathrm i#1}} 
\def\emi#1{\mkern1mu\mathrm e\mkern2mu^{-\mathrm i#1}} 
\def\iu{\mkern1mu\mathrm i\mkern1mu}
\def\Id{\mathrm{Id}}
\def\Z{\mathbb Z}
\providecommand{\abs}[1]{\lvert#1\rvert}
\DeclareMathOperator{\card}{\#}

\begin{document}
\maketitle

\begin{abstract}
  \noindent We study an elementary extremal problem on trigonometric polynomials
  of degree $3$. We discover a distinguished torus of extremal
  functions.%% $f_u(t)$ given by
\end{abstract}

\section{Introduction}

\noindent Let $\Lambda=\{\lambda_0,\lambda_1,\dots,\lambda_{n-1}\}$ be a set of $n$
frequencies and let $\T=\{z\in\C:|z|=1\}$. We study the following extremal problem: 

% \vskip\abovedisplayskip\setbox0=\hbox{$(\dagger)$\quad}\dimen0=\wd0 
% \noindent\box0
% $\vcenter{
% \advance\hsize by -\dimen0
% \noindent
% Given $n$ positive intensities
% $\varrho_0, \varrho_1, \dots, \varrho_{n-1}$, to find $n$ phases
% $\vartheta_0,\allowbreak \vartheta_1,\allowbreak \dots,\allowbreak \vartheta_{n-1}$ such that the maximum
% $\max_t|\sum\varrho_j\ei{\vartheta_j}\ei{\lambda_jt}|$ is minimal.}$
% \vskip\belowdisplayskip

% \noindent This should help us to study the
% following extremal problem:

\vskip\abovedisplayskip\setbox0=\hbox{$(\ddagger)$\quad}\dimen0=\wd0 
\noindent\box0
$\vcenter{
\advance\hsize by -\dimen0
\noindent
To find $n$ complex coefficients $c_0,c_1,\dots,c_{n-1}$ with given
moduli sum $\abs{c_0}+\abs{c_1}+\dots+\abs{c_{n-1}}=1$
such that the maximum
$\max_{z\in\T}|\sum c_jz^{\lambda_j}|$ is minimal.}$
\vskip\belowdisplayskip 

\noindent Note that this maximum's inverse is the \emph{Sidon
constant} $S(\Lambda)$. D.~J.~Newman (see \cite[Chapter
3]{sh51}) obtained the upper bound $S(\{0,1,\dots,N\})\le\sqrt N$ that is slightly better than the straightforward upper bound~$\sqrt{N+1}$: by Parseval's theorem for the
$\Ell^2$ space on the set~$\U_N$ of $N$th roots of unity, putting \[c_0+c_1z+\dots+c_{N-1}z^{N-1}+c_Nz^N=f(z)\text,\] we have
\begin{align}
\max_{z\in\T}|f(z)|^2
&=\max_{z\in\T}\max_{\omega\in\U_N}|f(z\omega)|^2\notag\\
&\ge\max_{z\in\T}\frac1N\sum_{\omega\in\U_N}|f(z\omega)|^2\notag\\
&=\max_{z\in\T}|c_0+c_Nz^N|^2+|c_1|^2+\dots+|c_{N-1}|^2\notag\\
&=\bigl(|c_0|+|c_N|\bigr)^2+|c_1|^2+|c_2|^2+\dots+|c_{N-1}|^2\label{newm}\\
&\ge\bigl(|c_0|+|c_1|+\dots+|c_N|\bigr)^2/N,\notag
\end{align}
and H.~S.~Shapiro showed (ibid.)\ that equality can
hold exactly if $N\in\{1,2,4\}$. % If $N=2$, equality holds
% exactly for multiples and translates of $f(z)=1+2\iu z+z^2$. 
If $N=3$, we shall show in the final section that the functions 
\vskip\abovedisplayskip
\centerline{$\displaystyle\frac{\iu2\sqrt2\cos \tau-1-3\sin \tau}{15} + \frac{3+\sin
\tau}{10}z + \frac{3-\sin \tau}{10} z^2 +
\frac{\iu2\sqrt2\cos \tau-1+3\sin \tau}{15} z^3$}\vskip\belowdisplayskip
\noindent have their modulus bounded by 
$3/5$ for each $\tau$, so that $5/3\le S(\{0,1,2,3\})<\sqrt{3}$.
\vskip\belowdisplayskip

A motivation for this problem is that we wish to know whether
the real and complex unconditionality constants are distinct for basic
sequences of characters $z^n$, but this remains undecided.

\section{Hervé Queffélec's proof}
When I showed Hervé Queffélec a proof that $S(\Lambda)\le\sqrt{n-1}$ for all sets $\Lambda$ with $n$~elements, he showed me how to adapt D.\ J.~Newman's argument to this more general case. 

Let $\Lambda$~be a set of $n$~frequencies. We may suppose that $\min\Lambda=0=\lambda_0$; let $N=\max\Lambda=\lambda_{n-1}$. Then
\begin{align*}
\max_{z\in\T}|f(z)|^2
&=\max_{z\in\T}\max_{\omega\in\U_N}|f(z\omega)|^2\notag\\
&\ge\max_{z\in\T}\frac1N\sum_{\omega\in\U_N}|f(z\omega)|^2\notag\\
&=\max_{z\in\T}|c_0+c_{n-1}z^{\lambda_{n-1}}|^2+|c_1|^2+\dots+|c_{n-2}|^2\notag\\
&=\bigl(|c_0|+|c_{n-1}|\bigr)^2+|c_1|^2+|c_2|^2+\dots+|c_{n-2}|^2\\
&\ge\bigl(|c_0|+|c_1|+\dots+|c_{n-1}|\bigr)^2/(n-1).\notag
\end{align*}
Let us now try to understand what is behind D.\ J.~Newman's argument.

\section{\texorpdfstring{Interpolating linear functionals on the space $\Cont_\Lambda(\T)$}{Interpolating linear functionals on the space C\_\textLambda(T)}}

\noindent If $L$ is a subspace of the space~$\Cont(T)$ of complex
continuous functions on a compact space~$T$ with $n$~dimensions, then
every functional~$l$ on~$L$ extends isometrically to a functional
on~$\Cont(T)$ by the Hahn-Banach theorem, that is, to a Radon measure
by the Riesz representation theorem. But the unit ball of the space of
measures is the weak*-closed convex hull of Dirac masses. By
Carathéodory's theorem for the space~$L$ that has $2n$~real
dimensions, $l$~extends isometrically to a linear combination of at
most $2n+1$~Dirac masses. Under additional hypotheses that are met in
our situation where $T=\T$, one can gain one dimension: there are $m\le 2n$~points
$z_k\in T$ and coefficients $b_k\in\C$ such that for every $f\in L$
one has $l(f)=\sum b_kf(z_k)$ and $\|l\|=\sum|b_k|$ (see
\cite[Exercice 6.8]{bo81}.) This implies in particular that there is a
function $f\in L$ whose maximum modulus points contain the $z_k$.

%On the other hand, given a basis $(f_j)$ of $L$ and any set of $n$~points $z_k$, the $n$ equations $l(f_j)=\sum b_kf_j(z_k)$ are expected to have a unique solution for the coefficients $b_k$, and this would yield an isomorphic extension of $l$. 

Let us now specialise to the case $L=\Cont_\Lambda(\T)$ with
$\Lambda$ a finite set. % Note that a function in $L$ has at most
% $\max\Lambda-\min\Lambda$ maximum modulus points; a trigonometric
% trinomial has at most 2 maximum modulus points up to periodicity.
Let us make the ad hoc hypothesis that the~$z_k$ are the $N$th roots
of unity, whose set forms the group $\U_N$: this obliges us to restrict our study to those
functionals~$l$ such that $l(\e_j)=l(\e_{j'})$ if $j\equiv
j'\bmod{N}$, where we write $\e_j(z)=z^j$ for $z\in\T$ and
  $j\in\Z$. Then the condition~$l(f)=\sum b_kf(z_k)$ reads
\begin{equation*}
l(\e_j)=\sum_{k=0}^{N-1}b_k\ei{2jk\pi/N}\text{ for $j\in\Lambda$,}
\end{equation*}
which may be interpreted as telling that the~$l(\e_j)$ are the Fourier
coefficients of the measure $\mu$ on $\U_N$ given by
\begin{equation*}
  \mu=\sum_{k=0}^{N-1}b_k\delta_{\ei{2k\pi/N}}
\end{equation*}
(where the Dirac measures act on $\U_N$). The set~$\Lambda$ might not be present in all classes modulo~$N$: let us set $l(\e_j)=0$ if $j$~is in a class in which~$\Lambda$ is absent. A “trivial” solution to these equations is then given by
\begin{equation*}
  b_k=\frac1N\sum_{j=0}^{N-1}\emi{2jk\pi/N}
  \begin{cases}
    l(\e_{j'})&\text{if there is $j'\equiv j\bmod{N}$ in $\Lambda$}\\
    0&\text{otherwise.}
  \end{cases}
\end{equation*}
The norm of $\mu$ is bounded by
\begin{equation*}
  \sum_{k=0}^{N-1}|b_k|
\end{equation*}
and is attained at $u\in\Cont(\U_N)$ if and only
if~$u(\ei{2k\pi/N})b_k=|b_k|$ for every~$k$, up to a nonzero complex
factor. This yields an upper bound for the norm of~$l$ that becomes an
equality if there is an~$f\in\Cont(\T)$ of norm~$1$ such that
$f(\ei{2k\pi/N})=u(\ei{2k\pi/N})$.

\section{My proof}

Here is a first application. The Sidon constant of a set~$\Lambda$ is also the supremum of the norm of the linear functionals~$l$ such that $l(e_j)$ is a unimodular complex number for all~$j\in\Lambda$:
\[|c_0|+|c_1|+\dots+|c_{n-1}|=\sup_{|l(\e_j)|=1}\biggl|\sum_{j=0}^{n-1}c_jl(\e_j)\biggr|.\]

\begin{prp}
  Let $\Lambda$ be a finite subset of $\Z$. The Sidon constant of
  $\Lambda$ is at most $(\card\Lambda-1)^{1/2}$.
\end{prp}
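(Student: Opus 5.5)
The plan is to use the dual description just given. For each functional $l$ on $\Cont_\Lambda(\T)$ with $|l(\e_j)|=1$ for all $j\in\Lambda$, I will exhibit a measure $\mu$ on $\T$ that extends $l$ and satisfies $\|\mu\|\le(\card\Lambda-1)^{1/2}$. Then for every $f=\sum c_j\e_j$ we get $\bigl|\sum c_jl(\e_j)\bigr|=\bigl|\int f\,d\mu\bigr|\le(n-1)^{1/2}\max_{\T}|f|$, and taking the supremum over $l$ gives the claim.

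First I normalise. Translating $\Lambda$ multiplies $f$ by a unimodular monomial and changes nothing, so we may assume $\min\Lambda=0=\lambda_0$. Put $N=\lambda_{n-1}=\max\Lambda$ with $n=\card\Lambda$. If $n=1$ the statement is trivial (read it as $S\le 1$), so assume $n\ge2$. The classes of $\lambda_1,\dots,\lambda_{n-2}$ modulo $N$ are distinct and different from the class of $0$. The only collision is $0\equiv N$.

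The extension is built in two steps. I take a point $z\in\T$ and, as in Section~3, a measure supported on the coset $z\U_N$. It will have the form $\mu=\sum_k b_k\delta_{z\ei{2k\pi/N}}$. We need $\int \e_j\,d\mu=l(\e_j)$ for all $j\in\Lambda$, and this forces the condition $l(\e_0)=z^{N}l(\e_N)$ on the colliding pair. So I choose $z$ with $z^N=l(\e_0)/l(\e_N)$, which is possible because both values are unimodular. On the coset the remaining equations then have the trivial solution of Section~3. Its Fourier data $a_j$ on $\Z/N\Z$ are nonzero only on $n-1$ classes, where they are unimodular: $a_j=l(\e_{\lambda})z^{-\lambda}$ for the representative $\lambda\in\Lambda$.

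The key estimate is $\|\mu\|=\sum_k|b_k|$. Here $b_k=\frac1N\sum_j a_j\emi{2jk\pi/N}$. By Cauchy--Schwarz and Parseval on $\U_N$,
\[\sum_k|b_k|\le N^{1/2}\Bigl(\sum_k|b_k|^2\Bigr)^{1/2}=N^{1/2}\Bigl(\frac1N\sum_j|a_j|^2\Bigr)^{1/2}=(n-1)^{1/2}.\]
This is exactly the dual form of Queffélec's computation.

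The main obstacle is the bookkeeping of the collision $0\equiv N$, that is, making sure the rotation $z$ absorbs it. I also need the normalisation of the discrete Fourier transform to be right, since a lost factor of $N$ would spoil the constant. Beyond that the argument is just Cauchy--Schwarz.
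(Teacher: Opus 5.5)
Your argument is correct and is essentially the paper's proof. The paper normalises to $l(\e_0)=l(\e_N)$ by a rotation and lifts $l$ to a measure on $\U_N$; you keep $l$ and lift it to the rotated coset $z\U_N$ instead, which amounts to the same thing, and then you run the identical Cauchy--Schwarz/Parseval estimate. One small slip: with $\mu=\sum_k b_k\delta_{z\ei{2k\pi/N}}$ the compatibility condition is $l(\e_N)=z^Nl(\e_0)$, which is what your formula for $a_j$ actually uses, not $l(\e_0)=z^Nl(\e_N)$; this is harmless, since such a unimodular $z$ exists either way.
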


\begin{proof}
  One may suppose that $\min\Lambda=0$ and choose $N=\max\Lambda$.
  Let~$l$ be a linear functional with coefficients~$l(\e_j)$ of
  modulus 1: one may suppose that $l(\e_0)=l(\e_N)$. Then
  \begin{align}
    \|l\|
    &\le\frac1N\sum_{k=0}^{N-1}\biggl|\sum_{j\in\Lambda\setminus\{N\}}\emi{2jk\pi/N}l(\e_j)\biggr|\notag\\
    &\le\Biggl(\frac1N\sum_{k=0}^{N-1}\biggl|\sum_{j\in\Lambda\setminus\{N\}}\emi{2jk\pi/N}l(\e_j)\biggr|^2\Biggr)^{1/2}\label{ytt}\\
    &=\biggl(\sum_{j\in\Lambda\setminus\{N\}}|l(\e_j)|^2\biggr)^{1/2}=(\card\Lambda-1)^{1/2}.\notag\qedhere
  \end{align}
\end{proof}

% \begin{rem}
%   Herv\'e Queff\'elec showed me a more elementary proof of the same
%   fact, that follows the line of D. J.~Newman's computation in the introduction: if $f\in\Cont_\Lambda(\T)$, then there are only $\card\Lambda-1$ squares, and not $n$, in Inequality~\eqref{newm}! 
% \end{rem}
\begin{rem}
  If $\Lambda=\{0,1,\dots,n\}$, then Inequality~\eqref{ytt} is an
  equality if and only if $(l(\e_j))_{j=0}^{n-1}$ is a
  \emph{biunimodular} sequence, that is a unimodular function on
  $\U_n$ whose Fourier transform is also unimodular. In other words,
  the matrix $H=(l(\e_{j-k}))_{0\le j,k\le n-1}$ is a \emph{circulant
    complex Hadamard matrix}, where the indices $j-k$ are computed modulo
  $n$: it satisfies $H^*H=n\,\Id$. Such matrices always exist: see
  \cite{bs95}.
\end{rem}

\section{\texorpdfstring{The real unconditional constant of $\{0,1,2,3\}$}{The real unconditional constant of \{0,1,2,3\}}}

Here is a second application. Recall that the real unconditional
constant of a sequence of elements of a normed space is the maximal
distortion caused by multiplying the coefficients of a linear
combination of these elements by $\pm1$. By a slight abuse of
language, the real unconditional constant of a set~$\Lambda$ in the
space~$\Cont(\T)$ is thus the supremum of the norm of the linear
functionals~$l$ such that $l(\e_j)\in\{-1,1\}$ for all~$j\in\Lambda$.
\begin{prp}
\label{cir}
  Let $\Lambda=\{0,1,2,3\}$. The real unconditional constant of~$\Lambda$ in~$\Cont(\T)$ is $5/3$.
\end{prp}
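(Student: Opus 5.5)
By symmetry there are only three sign patterns to treat, and the argument of Section~3 then gives both bounds. For a sign pattern $\varepsilon=(\varepsilon_0,\varepsilon_1,\varepsilon_2,\varepsilon_3)$, the norm of the functional $l$ with $l(\e_j)=\varepsilon_j$ does not change if we multiply $\varepsilon$ by $-1$. It also does not change under the substitution $z\mapsto -z$, which flips $\varepsilon_1$ and $\varepsilon_3$, or under the reversal $j\mapsto 3-j$, which comes from $f(z)\mapsto z^3\overline{f(z)}$. Up to these operations the $16$ patterns fall into three classes:
\begin{itemize}
\item the class of $(1,1,1,1)$, which also contains $(1,-1,1,-1)$;
\item the class of patterns with exactly one sign different from the others, represented by $(1,1,-1,1)$;
\item the class of $(1,1,-1,-1)$, which also contains $(1,-1,-1,1)$.
\end{itemize}
The first class gives norm $1$: the functional is evaluation at $z=1$ (respectively $z=-1$).

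For the upper bound in the other two classes I use the interpolation of Section~3, choosing the roots of unity to fit the pattern. For $(1,1,-1,-1)$ I take $N=4$, which imposes no compatibility condition since $\Lambda\subset\{0,1,2,3\}$. The trivial solution is $b_k=\frac14\sum_j\varepsilon_j\emi{2jk\pi/4}$. The four values $4b_k$ are $0$, $2-2\iu$, $0$ and $2+2\iu$, so $\sum|b_k|=\sqrt2<5/3$. For $(1,1,-1,1)$ I take $N=3$, which is allowed because $\varepsilon_0=\varepsilon_3$. The trivial solution is the discrete Fourier transform of $(1,1,-1)$ on $\U_3$, divided by $3$. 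Its moduli are $1/3$, $\abs{1+\bar\omega-\omega}/3=2/3$ and $2/3$, where $\omega=\ei{2\pi/3}$. Hence $\|l\|\le5/3$. Every pattern therefore gives norm at most $5/3$.

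For the lower bound it suffices, by the equality criterion of Section~3, to find a polynomial $f=c_0+c_1z+c_2z^2+c_3z^3$ with two properties:
\begin{itemize}
\item $\max_{\T}\abs f=1$;
\item at the cube roots of unity $\omega^k$, $f$ takes the unimodular values $\overline{b_k}/\abs{b_k}$ attached to the pattern $(1,1,-1,1)$.
\end{itemize}
Then $\abs{c_0+c_1-c_2+c_3}=\sum_k\abs{b_k}=5/3$.

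I expect the construction of this extremal polynomial, and the proof that $\abs f\le1$ on all of $\T$, to be the main obstacle. I would start from the one-parameter family displayed in the introduction, rescaled by $5/3$. Applying $z\mapsto -z$, reversal, or multiplication by a unimodular constant should carry it into the sign class of $(1,1,-1,1)$. I would then check, for instance at $\tau=0$, the interpolation conditions at $1,\omega,\omega^2$. To prove $\abs f\le1$ I would write $1-\abs{f(\ei t)}^2$ as a real trigonometric polynomial of degree $3$. It should vanish to second order at the three interpolation points, so I expect it to factor as $(1-\cos 3t)$, which is $\ge 0$, times a nonnegative factor of low degree, possibly after a shift in $t$. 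Checking the sign of that remaining factor is a short computation.
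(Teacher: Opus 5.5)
\textbf{The upper bound is fine.} Your three symmetry classes are right, evaluation at $\pm1$ settles the first class, and your liftings to $\U_4$ (norm $\sqrt2$) and to $\U_3$ (norm $5/3$) are computed correctly. The paper is more economical here: sign change and $z\mapsto-z$ already reduce to $l(\e_0)=l(\e_3)=1$, after which every pattern lifts to $\U_3$ with norm at most $5/3$. So neither the reversal nor $N=4$ is needed.

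\textbf{The gap is the lower bound.} This is half of the statement, and you leave it as a plan whose only concrete instance fails. At $\tau=0$ the coefficients are $c_0=c_3=(-1+2\sqrt2\iu)/15$ and $c_1=c_2=3/10$. For a real sign functional, the equality $\abs{\sum_j\varepsilon_jc_j}=\sum_j\abs{c_j}$ forces all $c_j$ onto a single line through $0$. That fails here. Your three operations (sign change, $z\mapsto-z$, reversal $z^3\overline{f}$) and unimodular factors all preserve this collinearity property. So they cannot carry $f(\cdot,0)$ into the class of $(1,1,-1,1)$, and your interpolation check at $1,\omega,\omega^2$ must fail. A general rotation $z\mapsto\alpha z$ does not help either. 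Since $c_1,c_2>0$ for every $\tau$, keeping $c_1\alpha$ and $c_2\alpha^2$ on a common line through $0$ forces $\alpha=\pm1$. Hence the only usable members of the family are those with $c_0,c_3$ real, i.e.\ $\cos\tau=0$.

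\textbf{The fix.} The paper uses $\tau=\pi/2$, i.e.\ $-4/15+2z/5+z^2/5+2z^3/15$. Apply $z\mapsto-z$, a sign change and the factor $5/3$ to get
\[G(z)=\frac{4+6z-3z^2+2z^3}{9},\qquad \abs{G(\ei t)}^2=\frac{65+16\cos3t}{81}\le1.\]
Moreover $G(1)=1$, $G(\omega)=1+\omega=\ei{\pi/3}$ and $G(\omega^2)=\emi{\pi/3}$, which are exactly your values $\overline{b_k}/\abs{b_k}$. You can also see it directly: $c_0+c_1-c_2+c_3=15/9=5/3$. This short computation completes your argument. It also makes the appeal to the bound $\abs{f(\cdot,\tau)}\le3/5$ for all $\tau$ unnecessary. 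As you anticipated, $1-\abs{G(\ei t)}^2=\tfrac{16}{81}(1-\cos3t)$, so the remaining factor is simply a constant.
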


\begin{proof}
  The polynomial ${-4}/{15} + {2z}/{5} + {z^2}/{5} +
{2z^3}/{15}$ studied in the next section will show that the real
  unconditional constant of $\Cont_\Lambda(\T)$ is at least $5/3$. As
  $l$ has the same norm as $\tilde l\colon f\mapsto l(f(\cdot+\pi))$,
  for which $\tilde l(\e_j)=(-1)^jl(\e_j)$, and as~$-l$, one may suppose that
  $l(\e_0)=l(\e_3)=1$. Let us now try to lift~$l$ to a sum of Dirac
  measures on the third roots of unity. Such a lifting is either the
  Dirac measure at $0$ or
  \begin{equation*}
    (l(\e_j)_{0\le j\le2})\in\{(1,-1,-1),(1,-1,1),(1,1,-1)\}
  \end{equation*}
  and these three cases yield the same norm
  \begin{equation*}
    \frac13(\abs{1-1-1}+\abs{1-\ei{2\pi/3}-\ei{4\pi/3}}+\abs{1-\ei{4\pi/3}-\ei{2\pi/3}})=5/3.\qedhere
  \end{equation*}
\end{proof}

\section{\texorpdfstring{The case $\{0,1,2,3\}$: a distinguished family of
  polynomials}{The case \{0,1,2,3\}: a distinguished family of
  polynomials}}\label{distpol}

\noindent 
% Let $\Lambda=\{0,1,2,3\}$.  Let $(\varrho,\vartheta)$ solve the extremal
% problem. If the point $t$ such that $\Phi^*(\varrho,\vartheta)=\Phi(t,\varrho,\vartheta)$
% were unique, then $\Phi^*_*=1$. If there were exactly two points $t,u$
% such that $\Phi^*(\varrho,\vartheta) = \Phi(t,\varrho,\vartheta) = \Phi(u,\varrho,\vartheta)$, then
% $\Phi^*_*=\sqrt2$. There are therefore exactly three points $t,u,t''$
% such that $\Phi^*(\varrho,\vartheta) = \Phi(t,\varrho,\vartheta) = \Phi(u,\varrho,\vartheta)=
% \Phi(t'',\varrho,\vartheta)$.
Let $f(z,\tau)$ be given by%\vskip\abovedisplayskip
%\centerline{$\displaystyle
\[\frac{\iu2\sqrt2\cos \tau-1-3\sin \tau}{15} + \frac{3+\sin
\tau}{10}z + \frac{3-\sin \tau}{10}z^2 +
\frac{\iu2\sqrt2\cos \tau-1+3\sin \tau}{15}z^3.\]%$}\vskip\abovedisplayskip
%\noindent 
One computes that the moduli sum of the coefficients is $1$,
independently of $\tau$. Note that $f(z,-\tau) = z^3f(z^{-1},\tau)$
and $f(z,\tau+\pi) = z^3\overline{f(z,\tau)}$, so that we shall
restrict the parameter $\tau$ to $[0,\pi/2]$. Let
$\Phi(t,\tau)=|f(\ei t,\tau)|^2$. We get
\begin{multline*}
\Phi(t,\tau) = \frac{2\sqrt2\sin{2\tau}}{75}
(\sin{t}-\sin{2t}+2\sin{3t}) + \frac{247-13\cos{2\tau}}{900}\\
+
(1+\cos{2\tau}) \Bigl( \frac{\cos{t}}{20} - \frac{\cos{2t}}{25}
\Bigl) + \frac{1+17\cos{2\tau}}{225} \cos{3t}.
\end{multline*}
Let us put 
$$M=\begin{pmatrix}
\displaystyle\frac{2\sin{2t}}{25}-\frac{\sin{t}}{20}-\frac{17\sin{3t}}{75}&
\displaystyle\frac{2\sqrt{2}}{75}(\cos{t}-2\cos{2t}+6\cos{3t})\\
\displaystyle\frac{2\sqrt{2}}{75}(\sin{t}-\sin{2t}+2\sin{3t})&
\displaystyle\frac{13}{900}-\frac{\cos{t}}{20}+\frac{\cos{2t}}{25}-\frac{17\cos{3t}}{225}
\end{pmatrix}.
$$
The critical points $(t,\tau)$ of $\Phi$ satisfy 
$$M\begin{pmatrix}\cos{2\tau}\\\sin{2\tau}\end{pmatrix}
=\begin{pmatrix}
  \displaystyle\frac{\sin{t}}{20}-\frac{2\sin{2t}}{25}+\frac{\sin{3t}}{75}\\
  0\vphantom{\frac00}
\end{pmatrix}.
$$
We have
$$\det{M} = \frac{1}{6750} \sin{t}
\Bigl(\cos{t}-\frac{1}{4}\Bigr) (4\cos{t}-11)
(16\cos^3{t}-72\cos^2{t}+33\cos{t}-41),$$
which vanishes exactly if
$\cos{t}\in\{-1,1/4,1\}$. Otherwise we get
\begin{equation}\label{CS}
  \left\{
    \begin{aligned}
      \cos{2\tau} & = \displaystyle- \frac{272\cos^3{t} - 72\cos^2{t} - 159\cos{t} + 23
      }{ 16\cos^3{t}-72\cos^2{t}+33\cos{t}-41}=C(t)\\
      \sin{2\tau} & = \displaystyle- \frac{24\sqrt{2} \sin{t} (4\cos{t}+1) (2\cos{t}-1) }{ 16\cos^3{t}-72\cos^2{t}+33\cos{t}-41}=S(t)
    \end{aligned}
  \right.
\end{equation}
Note that this solution is consistent, as $C^2+S^2=1$. For
such $\tau$, $\Phi(t,\tau)=9/25$. Checking the special cases
$\cos{t}\in\{-1,1/4,1\}$ yields that all local maxima are
given by the above formulas, that $\Phi$ attains its global
minimum, $0$, exactly for $\tau=0$ and $t=\pi$, and has exactly one other local
minimum, of value $49/225$, for $\tau=\pi/2$ and $t=0$. There is
exactly one other critical point, of value $5/18$, that
is a saddle point, given by $\tau=\arccos(17/37)/2$, $t=\arccos{1/4}$.

As $C(0)=1$, $C(\pm\pi/3)=-1$, $C(\pm\arccos(-1/4))=1$,
$C(\pi)=-1$, the inter\-mediate values theorem shows that
for a given $\tau$, there are exactly three solutions $t$ to
system \eqref{CS}, for which 
$\Phi(t,\tau)$ achieves then its global maximum, $9/25$.

Further details are given in~\cite{ne08b}.

\end{document}